\documentclass[11pt]{article}

\usepackage{latexsym}
\usepackage{amsthm}
\usepackage{amsfonts}
\usepackage{amssymb}
\usepackage{amsmath}
\usepackage{graphicx}
\usepackage{xcolor}
\usepackage{amssymb}
\usepackage{multirow}
\usepackage{titletoc}
\usepackage{hyperref}
\usepackage{subfigure}
\usepackage{booktabs}
\usepackage{verbatim}
\usepackage{url}
\usepackage[affil-it]{authblk}

\usepackage[top=1in, bottom=1.5in, left=1in, right=1in]{geometry}
\usepackage[algo2e,ruled,linesnumbered,vlined]{algorithm2e}

\usepackage[style=authoryear,backend=bibtex,firstinits=true,maxnames=3,url=false,doi=false,isbn=false]{biblatex}
\bibliography{fault}



\newtheorem{theorem}{Theorem}

\newtheorem{corollary}[theorem]{Corollary}
\newtheorem{remark}{Remark} 

  {\begin{itemize}%
    \setlength{\topsep}{0pt}%
    \setlength{\itemsep}{0pt}%
    \setlength{\parskip}{0pt}}%
  {\end{itemize}}

\linespread{1.1}

\begin{document}

\title{An Error-Resilient Redundant Subspace Correction Method}

\author{Tao Cui%
  \thanks{Email: \texttt{tcui@lsec.cc.ac.cn}}}
\affil{LSEC, Academy of Mathematics and System Sciences, Beijing, China}

\author{Jinchao Xu%
  \thanks{Email: \texttt{xu@math.psu.edu}}}
\affil{Department of Mathematics, Pennsylvania State University, PA, USA}

\author{Chen-Song Zhang%
  \thanks{Email: \texttt{zhangcs@lsec.cc.ac.cn}}}
\affil{NCMIS \& LSEC, Academy of Mathematics and System Sciences, Beijing, China}

\date{\today}

\maketitle

\begin{abstract}
As we stride toward the exascale era, due to increasing complexity of supercomputers, hard and soft errors are causing more and more problems in high-performance scientific and engineering computation. In order to improve reliability (increase the mean time to failure) of computing systems, a lot of efforts have been devoted to developing techniques to forecast, prevent, and recover from errors at different levels, including architecture, application, and algorithm. In this paper, we focus on algorithmic error resilient iterative linear solvers and introduce a redundant subspace correction method. Using a general framework of redundant subspace corrections, we construct iterative methods, which have the following properties: (1)~Maintain convergence when error occurs assuming it is detectable; (2)~Introduce low computational overhead when no error occurs; (3)~Require only small amount of local (point-to-point) communication compared to traditional methods and maintain good load balance; (4)~Improve the mean time to failure. With the proposed method, we can improve reliability of many scientific and engineering applications. Preliminary numerical experiments demonstrate the efficiency and effectiveness of the new subspace correction method.
\end{abstract}

\noindent \textbf{Keywords:} High-performance computing, fault-tolerance, error resilience, subspace correction, domain decomposition, additive Schwarz method

%
\newpage
\setcounter{tocdepth}{1}
\tableofcontents
\bigskip \bigskip
%

\section{Introduction}\label{sec:intro}

Simulation-based scientific discovery and engineering design demand extreme computing power and high-efficiency algorithms. This demand is one of the main driving forces to pursuit of extreme-scale computer hardware and software during the last few decades~\parencite{Keyes2011}. Large-scale HPC installations are interrupted by data corruptions and hardware failures with increasing frequency~\parencite{Miskov-zivanov2007} and it becomes more and more difficult to maintain a reliable computing environment. It has been reported that the ASCI~Q computer (12,288 EV-68 processors) in the Los Alamos National Laboratory experienced 26.1 radiation-induced CPU failures per week~\parencite{Michalak2005} and a BlueGene/L (128K processors) experiences one soft error in its L1 cache every 4--6 hours due to radioactive decay in lead solder~\parencite{Bronevetsky2008a}. 

Computer dependability is, in short, a property that reliable results can be justifiably achieved; see, for example, \cite{Laprie1995}. Without promising reliability of a computer system, no application can promise anything about the final outcome. Design computing systems that meet high reliability standards, without exceeding fixed power budgets and cost constraints, is one of the fundamental challenges that present and future system architects face. It has become increasingly important for algorithms to be well-suited to the emerging parallel hardware architectures. Co-design of architecture, application, and algorithm is particularly important given that researchers are trying to achieve exascale ($10^{18}$ floating-point operations per second) computing~\parencite{Mukherjee2005,Abts2006,Dongarra2011}. To ensure robust and resilient execution, future systems will require designers across all layers (hardware, software, and algorithm) of the system stack to integrate design techniques adaptively~\parencite{Reddi2012}. 

As we enter the multi-petaflop era, frequency of a single CPU core does not increase beyond certain critical value. On the other hand, the number of computing cores in supercomputers is growing exponentially, which results in higher and higher system complexity. For example, in the recent released HPC Top 500 list~(\url{Top500.org}), the Tianhe-2 system at the National Supercomputing Center in Guangzhou has claimed the first spot in the Top 500. Tianhe-2 consists of 16,000 computer nodes, each comprising two Intel Ivy Bridge Xeon processors and three Xeon Phi coprocessors (3,120,000 processing cores and 1.37TB RAM in total). Tianhe-2 delivers 33.86 petaflops of sustained performance on the HPL benchmark, which is about 61\% of its theoretical peak performance. 

All components of a computing system (hardware and software) are subject to errors and failures. Inevitably, more complex the system, lower the reliability. Exascale computing systems are expected to be consist of massive number of computing nodes, processing cores, memory chips, disks, and network switches. It is projected that the Mean Time To Failure (MTTF) for some components of an exascale system will be in the minutes range. Fail-stop process failures is noticeable and is a common type of hardware failures on large computing systems, where the failed process stops working or responding and it will cause all data associated with the failed process lost. Soft errors (bit flips) caused by cosmic radiation and voltage fluctuation are another type of significant threads to long-running distributed applications. Large cache structures in modern multicore processors are particularly vulnerable to soft errors. Recent studies~\parencite{Bronevetsky2008a,Shantharam2011,Malkowski2010} show that soft errors could have very different impact on applications, from no effect at all or silent error to application crashes. 

For many PDE-based applications, solution of linear systems often takes most of the computing time (usually more than $80\%$ of wall-time for large simulations). Providing low overhead and scalable fault-tolerant linear solvers (preconditioners) is the key to improve reliability of these applications. Fault-tolerant iterative methods have been considered and analyzed by many researchers; see \cite{roy1993fault,Hoemmen,Shantharam2012} and references therein. Other fault-tolerant techniques in the field of numerical linear algebra can also be applied to iterative solvers~\parencite{Chen2008}. Most of existing fault-tolerant techniques fall into the following three categories:

{\it 1. Hardware-Based Fault Tolerance.}
Memory errors are one of the most common reasons of hardware crashes; see~\cite{Mukherjee2005,zhang2005computing} and references therein. Impact of soft errors in caches on the resilience and energy efficiency of sparse iterative methods are analyzed in~\cite{Bronevetsky2008a}. Hardware-based error detection and correction has been employed on different levels to improve system reliability. Different kinds of Error Correcting Code (ECC) schemes have been employed to protect the memory data from single or multiple bit flips. However, using more complex ECC schemes not only result in higher cost in hardware and energy, but also undermine the performance~\parencite{Malkowski2010}. 

{\it 2. Software-Based Fault Tolerance.}
The most important form of software fault tolerance techniques is probably checkpointing; see~\cite{Treaster2005} and references therein for details. If a failure occurs in one of the independent components, the directly affected parts of the system or the whole system is restarted and rolled back to a previously-stored safe state. The checkpointing and restarting techniques ensure that the internal state of recovered process conforms to the state before failure. There are several ways to design checkpoints, such as disk checkpointing, diskless checkpointing, and message logging~\parencite{plank1998diskless,langou2007recovery,Nassar2008}. Checkpoint/restart is usually applied to treat fail-stop failures because it is able to tolerate the failure of the whole system. However, the overhead associated with this approach is also very high---If a single process fails, the whole application needs to be restarted from the last stored state. Another approach is to utilize optimizing compilers to improve resilience; see, for example, \cite{chen2005compiler,li2005improving}.

{\it 3. Algorithm-Based Fault Tolerance.}
Algorithm-based fault tolerance (ABFT) schemes based on various implementations of checksum are proposed originally by \cite{Huang1984}. Later this idea was extended to detect and correct errors for matrix operations such as addition, multiplication, scalar product, LU-decomposition, and transposition; see, for example, \cite{Luk1986,boley1992algorithmic}. Another interesting work worth-noticing is an algorithm-based fault tolerant technique for fail-stop type of failures and its applications in ScaLAPACK~\parencite{Chen2008}. Error resilient direct solvers have recently been considered when single and multiple silent errors are occurred in \cite{Du2011} and in \cite{Du2012}, respectively. Fault-tolerant iterative methods such as SOR, GMRES, and CG for sparse linear systems have also been considered in~\cite{roy1993fault,Hoemmen,Shantharam2012} (in the event when there is at most one error). Selective reliability for iterative methods can be achieved using the ideas by \cite{Hoemmen}. \cite{stoyanov2013numerical} propose a new analytic approach for improving resilience of iterative methods with respect to silent errors by rejecting large hardware error propagation.

In this paper, we focus on resilient iterative solvers/preconditioners from a completely different perspective. Our main goal is to increase mean time to failure (MTTF) in the algorithm level by introducing local redundancy to the iterative procedure. We first introduce a virtual machine model, based on which we propose a framework of space decomposition and subspace correction method to design iterative methods that are reliable in response to errors. The general idea of subspace correction is to use a divide and conquer strategy to decompose the original solution space into the summation of a number of subspaces and then to make corrections on subspaces in an appropriate fashion. We mainly explore the intrinsic fault/error tolerance features of the method of subspace corrections:
\begin{itemize}
\item In the implementation of subspace correction method, we introduce redundant subspaces locally and make an appropriate mapping between subspaces and processors;
\item The proposed iterative algorithm still converges when single or multiple processes fail and it does not introduce heavy overhead in case no error occurs;
\item The proposed algorithm can be combined with existing hardware, software, and algorithm based fault tolerant techniques to improve reliability of spare-solver related applications. 
\end{itemize}

The rest of the paper is organized as follows: In Section~\ref{sec:virtual}, we describe a virtual machine model which will be used in the numerical experiments. In Section~\ref{sec:psc}, we discuss a parallel subspace correction method framework. In Section~\ref{sec:msc}, we discuss a multiplicative subspace correction method. In Section~\ref{sec:numerics}, we give some preliminary numerical results to test the proposed algorithms. And we conclude the paper with a few general remarks in Section~\ref{sec:conclusion}.
%

\section{A virtual machine model}\label{sec:virtual}

In order to describe our algorithm framework, we need to introduce a simplified reliability model based on the seven-level model proposed by Parhami~\parencite{Parhami1994,Parhami1997}. In our model, we assume that an application could be in one of the four states---ideal, faulty, erroneous, or failed; see Figure~\ref{fig:state}. %
\begin{figure}[h!!] 
   \centering
   \includegraphics[width=0.2\linewidth]{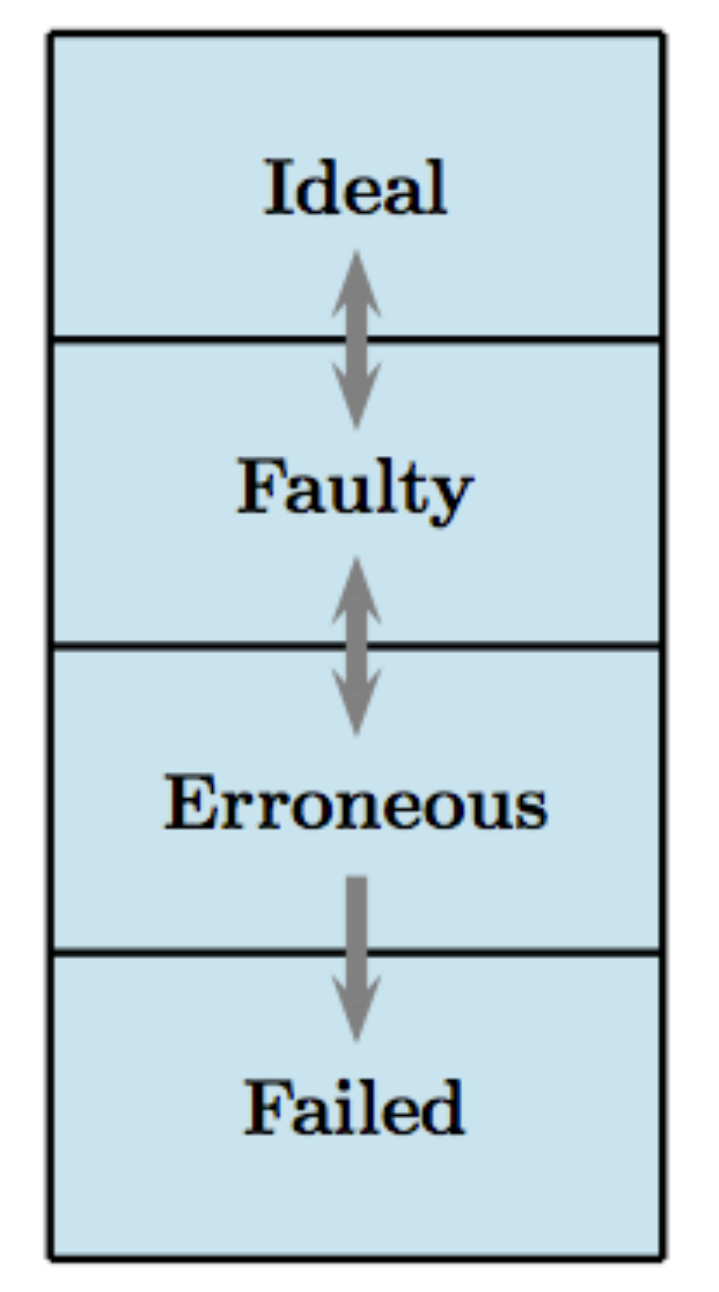} 
   \caption{System states in a simplified reliability model}
   \label{fig:state}
\end{figure}

Models of reliability have been also discussed by \cite{Hoemmen}. Notice that in our model, we distinguish fault and error. These terms are not exactly the same as the ones other people might be using where fault and error are usually interchangeable. We now describe these four states in details:
\begin{itemize}
\item Ideal state is the reliable operating condition under which expected output can be justifiably obtained.
\item A fault refers to an abnormal operating condition of the computer system due to a defective hardware or software. A fault could be transient or permanent---A transient fault is some incorrect data which affects the application temporarily and will be replaced by correct data in later time (e.g., a bit flip in cache which will be flushed later by the data in main memory). On the other hand, a permanent fault stands for incorrect data which will not be changed automatically (e.g., incorrect data in the main memory). A fault may not eventually cause error(s) (e.g., a bit-flip in cache might never be used); only if a fault is actually exercised, it may contaminate the data flow and cause errors. 
\item An error could be ``hard'' or ``soft'': A hard error is due to hardware failures (or unusual delays) and may be caused by a variety of phenomena, which include, but are not limited to, an unresponsive network switch or an operating system crashing; A soft error, on the other hand, is an one-time event, such as a bit-flip in main memory (and this bit is actually used in the application) and a logic circuit output error, that corrupts a computing system’s state but not its overall functionality. This concept of ``error'' can also be extended for the case when a node does not respond within an expected time period. Errors can be detected and corrected by the application in our model. 
\item A failed state means that some part of or whole application does not produce the expected results. As long as a system enters the ``failed'' state, interference from outside is necessary to fix the problem and the program itself cannot do anything to fix it. Resilience is a measure of the ability of a computing system and its applications to continue working in the presence of fault and error. 
\end{itemize}

Based on the reliability model described above, we introduce a virtual machine (VM), that ensures isolation of possibly unreliable phases of execution. A virtual machine can support individual processes or a complete system depending on the abstraction level where virtualization occurs~\parencite{Smith2005}. The concept of virtualization can be applied in various places, for example subsystems such as disks or an entire cluster. To implement a virtual machine, developers add a software layer to a real machine to support the desired architecture. By doing so, a VM can circumvent real machine compatibility and hardware resource constraints.

Due to defective hardwares and/or faulty data, a computer system could be compromised by errors. In a distributed memory cluster system, there could be deadlocks and other failures due to unresponsive computer nodes. In this conceptive VM under consideration, an error can be detected and resolved by system- or user-level error correction mechanisms. For example, a hanging guest process can be killed and resubmitted\footnote{A static Message Passing Interface (MPI) program has very limited job control and a single failed processor could cause the whole application to fail. Hence, the assumption {\bf A1} might not be satisfied for the current MPI standard. However, in the dynamic MPI standard, this could be implemented in practice~\parencite{Fagg2000}. Fault-tolerant MPI has been discussed by~\cite{Gropp2004}.
}; a bit-flip data error in the memory can be corrected by ECC. 

For proof-of-concept, we assume that our virtual machine guarantees the following reliability properties: 
\begin{itemize}
\item[\bf A1.] At any specific time in $(0,T]$ during the computation, there could be at most one processing unit in the erroneous/failed phase. Note that this assumption can be relaxed later on in \S\ref{sec:improve}.
\item[\bf A2.] An erroneous processing unit $U_i$ can be detected and corrected within a fixed amount of time.
\item[\bf A3.] A processing unit could be in any state for arbitrarily long time. For example, it could take more time to fix an erroneous or failed process than the actual computing time of the application. 
\end{itemize}
Depending on the programming model, a processing (or computing) unit could be a processing core, a multicore processor, or a computing node of a cluster. 

%

\section{Method of subspace corrections}\label{sec:psc}

Let $(\cdot, \cdot)$ be the $L^2$-inner product on $\Omega \subset \mathbb{R}^d$ ($d=1,2,3$) and a $n$-dimensional vector space $V$; its induced norm is denoted by $\| \cdot \|$. Let $A$ be a symmetric positive definite (SPD) operator on $V$, i.e., $A^T=A$ and $(Av,v) > 0$ for all $v \in V\backslash\{0\}$. The adjoint of $A$ with respect to $(\cdot, \cdot)$, denoted by $A^T$, is defined by $(Au,v) = (u,A^Tv)$ for all $u, v\in V$. As $A$ is SPD with respect to $(\cdot,\cdot)$, the bilinear form $(A\cdot,\cdot)$ defines an inner product on $V$, denoted by $(\cdot,\cdot)_{A}$, and the induced norm of $A$ is denoted by $\| \cdot \|_{A}$. The adjoint of $A$ with respect to $(\cdot,\cdot)_A$ is denoted by $A^*$. In this paper, we consider solution methods for the linear equation
\begin{equation}\label{eqn:linear_system}
Au=f.
\end{equation}

\subsection{Spatial Partition}

Suppose the computational domain $\Omega$ has been one-dimensionally\footnote{This assumption is only for the sake of discussion and can be removed easily.} partitioned into several subdomains $D_1, \ldots, D_N$ and each of these subdomains is owned by one processing (or computing) unit; see Figure~\ref{fig:partition0} (Left). 
Note that, although we use geometric partitioning to demonstrate the ideas, the method is  applicable to the algebraic versions. These simplifications (including the geometric domain decomposition assumption) have been made to make the discussion easier and are not essential. 

In general, we can view this partition in an algebraic setting: Let $D$ be the set of all indices for the degrees of freedom (DOFs) (number of the DOFs is assumed to be $n$) and 
$$
D := \{1,2,\ldots,n\} = \bigcup_{i=1}^N D_i.
$$
be a partition of $D$ into $N$ disjoint, nonempty subsets. For each $D_i$ we consider a nested sequence of larger sets $D_i^\delta$ with 
$$
D_i = D_i^0 \subseteq D_i^1 \subseteq D_i^2 \subseteq \cdots \subseteq D,
$$
where the nonnegative integer $\delta$ is the level of overlaps. 

Suppose the vector space $V$ be the solution space on $D$. And, $V$ is provided with a space decomposition
\begin{equation}\label{eqn:decomp}
V = \sum_{i=1}^N V_i,
\end{equation}
where the nonempty subspaces $V_i \subseteq V$ associated to the unknowns in the set $D_i^\delta$. To solve for the degrees of freedom in $D_i$, we might need data in $D^\delta_i$. We assume that all the necessary data for $D_i^\delta$ is owned by the processing unit $U_i$ for each $i$. With abuse of notation, we call this set of data $D_i^\delta$ as well.
\begin{figure}[h] 
   \centering
   \includegraphics[width=0.85\linewidth]{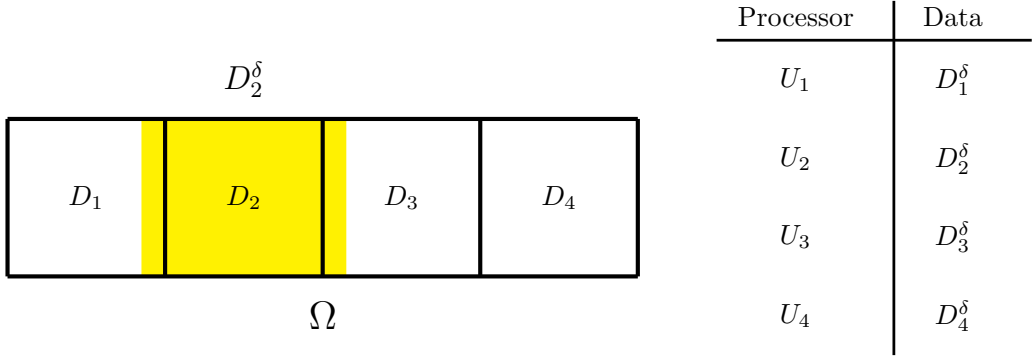} 
   \caption{Partition of the physical domain for overlapping additive Schwarz methods}
   \label{fig:partition0}
\end{figure}

\subsection{Subspace Corrections}

To solve large-scale linear systems arising from partial differential equations (PDEs), preconditioned iterative methods are usually employed~\parencite{Hackbusch.Hackbusch.1994fk}. It is well-known that the rate of convergence of an iterative method (in particular a Krylov space method) is closely related to the condition number of the preconditioned coefficient matrix. A good preconditioner $B$ for $Ax=b$ should satisfies: 
\begin{itemize}
\item The condition number $\kappa(BA)$ of the preconditioned system is small compared with $\kappa(A)$;
\item The action of $B$ on any $v \in V$ is computationally cheap and has good  parallel scalability.
\end{itemize}
A powerful tool for constructing and analyzing (multilevel) preconditioners and iterative methods is the method of (successive and parallel) subspace corrections. A systematic analysis of subspace correction methods for SPD problems has been introduced by \cite{Xu.J1992a}. Here we give a brief review of method of subspace corrections.

Let $A_i : V_i \rightarrow V_i$ be the restriction of $A$ on the subspace $V_i$, i.e.,
$$
(A_i u_i, v_i) = (A u_i, v_i), \qquad \forall u_i, v_i \in V_i.
$$
Assume that $Q_i : V \rightarrow V_i$ is the orthogonal projection with respect to the $L^2$-inner product, namely,
$$
(Q_i u, v_i) = (u, v_i), \qquad \forall v_i \in V_i.
$$
In a similar manner, we define the projection with respect to the $A$-inner product, i.e.,
$$
(P_i u, v_i)_A = (u, v_i)_A, \qquad \forall v_i \in V_i.
$$

For each $1 \le i \le N$, we introduce a SPD operator $S_i:V_i \rightarrow V_i$ that is an approximation of the inverse of $A_i$ such that
\begin{equation}\label{eqn:subspace}
\| I - S_i A_i \|_A < 1.
\end{equation} 
We can construct a successive subspace correction (SSC) method by generalizing the Gauss-Seidel iteration: Let $v=u^{m-1}$ be the current iteration and
\begin{equation}\label{eqn:ssc}
v = v + S_i Q_i (f-Av) \qquad i = 1, 2, \ldots, N.
\end{equation}
And the new iteration $u^m = v$. By denoting $T_i = S_i Q_i A : V \rightarrow V_i$ for each $i = 1:N$, we get
$$
u- u^m = (I-T_N)(I-T_{N-1})\cdots(I-T_1) (u-u^{m-1}).
$$ 
For simplicity we often define the successive subspace correction operator $B_{\text{SSC}}$ implicitly as follows
\begin{equation}\label{eqn:ssc2}
I - B_{\text{SSC}} A = (I-T_N)(I-T_{N-1})\cdots(I-T_1).
\end{equation}

The convergence analysis of SSC has been carried out by several previous work and a sharp estimate of the convergence rate has been originally given by \cite{Xu.J;Zikatanov.L2002}:
\begin{theorem}[X-Z Identity]\label{thm:xz}\rm
If \eqref{eqn:decomp} and \eqref{eqn:subspace} hold, then the successive subspace correction method \eqref{eqn:ssc} converges and the following identity holds:
$$
\|I - B_{\text{SSC}} A\|_A^2 = 1 - \frac{1}{C}, 
$$
where the non negative constant
$$
C = \sup_{\|v\|_A=1} \inf_{\sum_{i=1}^N v_i = v} \sum_{i = 1}^N \Big\| \overline T_i ^{-\frac{1}{2}}(v_i + T_i^* P_i \sum_{j>i} v_j) \Big\|_A^2 \quad \text{and} \quad \overline T_i = T_i + T_i^* - T_i^*T_i.
$$
\end{theorem}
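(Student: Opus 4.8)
The plan is to reduce the identity to two telescoping relations joined by a single Cauchy--Schwarz step, and then to pin down the extremal decomposition that turns the resulting inequality into an equality. I would write $E := I - B_{\text{SSC}} A = (I-T_N)\cdots(I-T_1)$ and $E_i := (I-T_i)\cdots(I-T_1)$ with $E_0 = I$, so that $E = E_N$ and $E_{i-1}-E_i = T_i E_{i-1}$. A few facts are recorded first: since $T_i = S_i A_i P_i$ has range in $V_i$ and $P_i$ is the $A$-projection, one has $T_i = P_i T_i = T_i P_i$ and the same holds for $T_i^*$, so $\overline{T}_i = P_i \overline{T}_i P_i$ acts on $V_i$; moreover $I - \overline{T}_i = (I-T_i)^*(I-T_i)$, which, in view of \eqref{eqn:subspace}, makes $\overline{T}_i$ SPD on $V_i$ with $0 < \overline{T}_i \le I$, so that $\overline{T}_i^{-1/2}$ is well defined there.

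First I would establish the per-sweep energy balance $\|v\|_A^2 - \|Ev\|_A^2 = \sum_{i=1}^N (\overline{T}_i E_{i-1} v, E_{i-1} v)_A$, obtained by summing the one-step relation $\|E_{i-1}v\|_A^2 - \|E_i v\|_A^2 = (\overline{T}_i E_{i-1}v, E_{i-1}v)_A$ (the expansion of $\|(I-T_i)u\|_A^2$ at $u=E_{i-1}v$). The second, less obvious identity is that for every decomposition $v = \sum_i v_i$ with $v_i \in V_i$ one has $\|v\|_A^2 = \sum_{i=1}^N (\xi_i, E_{i-1}v)_A$, where $\xi_i := v_i + T_i^* P_i \sum_{j>i} v_j \in V_i$. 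I would prove it with the partial sums $s_i := \sum_{j>i} v_j$ (so $s_0 = v$, $s_N = 0$, $v_i = s_{i-1}-s_i$): using $(T_i^* s_i, E_{i-1}v)_A = (s_i, E_{i-1}v - E_i v)_A$, the right-hand side collapses to $\sum_i [(s_{i-1}, E_{i-1}v)_A - (s_i, E_i v)_A] = (s_0, v)_A = \|v\|_A^2$.

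The forward inequality then follows at once. Since $\overline{T}_i = P_i \overline{T}_i P_i$, each summand $(\xi_i, E_{i-1}v)_A$ of the second identity equals $(\overline{T}_i^{-1/2} \xi_i, \overline{T}_i^{1/2} E_{i-1} v)_A$, so Cauchy--Schwarz together with the first identity gives $\|v\|_A^4 \le \big(\sum_i \|\overline{T}_i^{-1/2}\xi_i\|_A^2\big)(\|v\|_A^2 - \|Ev\|_A^2)$. Taking the infimum over admissible decompositions and then the supremum over $\|v\|_A = 1$ yields $C \ge (1-\|E\|_A^2)^{-1}$, equivalently $\|E\|_A^2 \le 1 - 1/C$.

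The reverse inequality is the crux, since it requires the infimum to be attained at precisely the value that renders the supremum sharp. Here I would determine the minimizing decomposition from the stationarity conditions of $\min \sum_i \|\overline{T}_i^{-1/2}\xi_i\|_A^2$ under the constraint $\sum_i v_i = v$; eliminating the Lagrange multiplier gives an explicit lower-triangular recursion for the optimal $\overline{T}_i^{-1}\xi_i$, and I expect the minimal value to be $((I - E^*E)^{-1} v, v)_A$. This is exactly right in the one-subspace case, where $I - E^*E = \overline{T}_1$ and the forced decomposition gives the single term $(\overline{T}_1^{-1} v, v)_A$. Granting the identification, $C = \sup_{\|v\|_A=1} ((I-E^*E)^{-1}v, v)_A = \lambda_{\max}\big((I-E^*E)^{-1}\big) = (1-\|E\|_A^2)^{-1}$, which with the forward bound closes the proof (and incidentally forces $C \ge 1$, so $\|E\|_A < 1$ and the method converges). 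The main obstacle is thus verifying that the optimal-decomposition functional coincides with the quadratic form of $(I-E^*E)^{-1}$; note the Cauchy--Schwarz step need not be tight termwise for a general $v$, only in the aggregate at the extremizer, just as Kantorovich's inequality is sharp only on eigenvectors in the one-subspace case.
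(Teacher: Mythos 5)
The paper does not prove Theorem~\ref{thm:xz}; it is quoted from Xu and Zikatanov (2002), so your attempt can only be measured against the literature, not against an in-paper argument. Your forward direction is complete and correct, and it is essentially the standard one: the telescoping energy identity $\|v\|_A^2-\|Ev\|_A^2=\sum_i(\overline T_iE_{i-1}v,E_{i-1}v)_A$, the decomposition identity $\|v\|_A^2=\sum_i(\xi_i,E_{i-1}v)_A$, and a single Cauchy--Schwarz yield $C\ge(1-\|E\|_A^2)^{-1}$. (Minor care: $\overline T_i^{1/2}E_{i-1}v$ should be read as $\overline T_i^{1/2}P_iE_{i-1}v$, which is harmless since $\overline T_i=P_i\overline T_iP_i$.)

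The reverse direction is where the write-up stops short: you conjecture, but do not verify, that $\inf_{\sum_i v_i=v}\sum_i\|\overline T_i^{-1/2}\xi_i\|_A^2=((I-E^*E)^{-1}v,v)_A$. The conjecture is true, and it can be closed cleanly without grinding through the stationarity recursion. The substitution $(v_i)\mapsto(\xi_i)$ is a triangular bijection of $\prod_iV_i$ with unit diagonal, and back-substitution gives $\sum_iv_i=\sum_iE_{i-1}^*\xi_i=:L\xi$; with respect to the weighted inner product $\langle\xi,\eta\rangle:=\sum_i(\overline T_i^{-1}\xi_i,\eta_i)_A$ one finds $L^*v=(\overline T_iP_iE_{i-1}v)_i$, hence $LL^*=\sum_iE_{i-1}^*\overline T_iE_{i-1}=I-E^*E$, the last equality being precisely your first identity read as an identity of $A$-self-adjoint operators. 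The minimum of $\langle\xi,\xi\rangle$ over solutions of $L\xi=v$ is then $((LL^*)^{-1}v,v)_A$, which is your claimed value, and the supremum over $\|v\|_A=1$ gives $(1-\|E\|_A^2)^{-1}$. One logical ordering point: this step needs $I-E^*E$ invertible, i.e.\ $\|E\|_A<1$, so convergence cannot be extracted from the identity afterwards as your final parenthetical suggests; it must be proved first, and it follows from the first identity together with \eqref{eqn:decomp} (if $\|Ev\|_A=\|v\|_A$ then $P_iE_{i-1}v=0$ for all $i$, whence inductively $E_iv=v$ and $P_iv=0$ for all $i$, so $v$ is $A$-orthogonal to $\sum_iV_i=V$ and $v=0$; finite dimensionality then gives $\|E\|_A<1$). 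With these two points supplied, your outline becomes a complete proof.
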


\begin{remark}[Exact Solver for Subspace Correction]\rm
A common choice of the subspace solver is $S_i = A_i^{-1}$, i.e. the problems on subspaces $V_i$ are solved exactly. In this case, the constant in Theorem~\ref{thm:xz}
$$
C = \sup_{\|v\|_A=1} \inf_{\sum_{i=1}^N v_i = v} \sum_{i = 1}^N \Big\| P_i (\sum_{j \ge i} v_j) \Big\|_A^2.
$$
This identity has been utilized to analyze convergence rate of the multigrid methods and the domain decomposition methods.
\end{remark}

\begin{remark}[Parallel Subspace Correction]\rm
The operator $B_{\text{SSC}}$ in \eqref{eqn:ssc2} is often used as a preconditioner of the Krylov methods. An additive version of subspace correction method, the so-called parallel subspace correction (PSC) preconditioner, can be defined as 
\begin{equation}\label{eqn:psc}
B_{\text{PSC}} := \sum_{i=1}^N S_i Q_i.
\end{equation}
The preconditioned system 
$$
B_{\text{PSC}} A = \sum_{i=1}^N S_i Q_i A = \sum_{i=1}^N T_i.
$$
This type of preconditioners is often used for parallel computing as all the subspace solvers can be carried out independently and simultaneously, which is clear from the above equation. 
\end{remark}

\begin{remark}[Colorization]\label{rem:color}\rm
For parallel implementation of SSC, we need to employ colorization: Suppose we partition the computational domain into $N_{\cal C}$ colors, i.e., $D = \bigcup_{t=1}^{N_{\cal C}} \bigcup_{i\in {\cal C}(t)} D_i$ such that, for any $t=1,2,\ldots,N_{\cal C}$,
$$
P_i P_j = 0 \qquad \forall \, i,j \in \cup_{i\in {\cal C}(t)} D_i.
$$
Namely, $P_i$ and $P_j$ are orthogonal to each other if they belong to the same color $t$. This makes the parallelization among the same color possible. In this sense, SSC can be written as several successive PSC iterations using colorization:
$$
v = v + \sum_{i \in {\cal C}(t)} S_i Q_i (f-Av) \qquad t = 1,2,\ldots,N_{\cal C}.
$$
So we can use PSC as an example to demonstrate what will happen to subspace correction methods with presence of errors. This is because PSC is much easier to understand in the parallel setting. 
\end{remark}

\subsection{Parallel subspace correction in a faulty environment}

A special case of parallel subspace correction method is the widely-used classical additive Schwarz method~\parencite{toselli2005domain}. Here, as an example, we consider an overlapping version of the additive Schwarz method (ASM), which is often employed for large-scale parallel computers because of its efficiency and parallel scalability. A typical program flow chart of the additive Schwarz method in a not-error-free world (under the assumptions {\bf A1}--{\bf A3}) is given in Figure~\ref{fig:asm} (We use the Parallel Activity Trace graph or PAT by \cite{Deng2013} to denote the main ideas of the algorithms.\footnote{The y-axis is processing units and the x-axis is time. The solid bars stand for computational work and springs stand for inter-process communication.}) 
\begin{figure}[h!!] 
   \centering
   \includegraphics[width=0.9\linewidth]{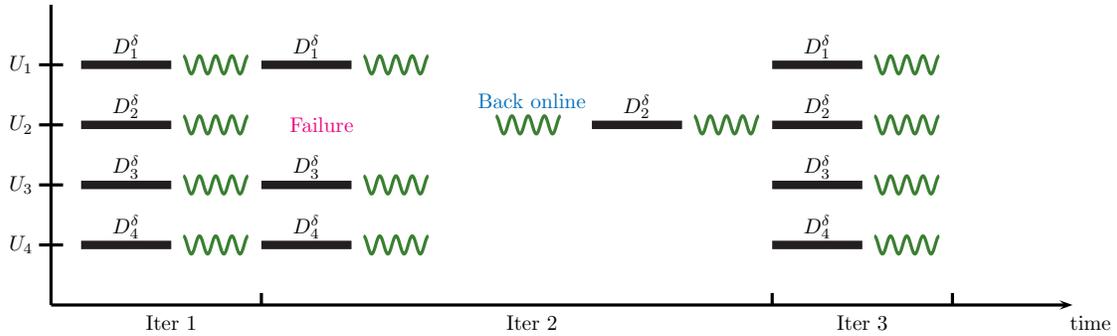} 
   \caption{Parallel subspace correction without error resilience}
   \label{fig:asm}
\end{figure}
When the processing unit $U_2$ fails to respond, the other processing units will be forced to wait until $U_2$ has been put back online; see, for example Iteration 2, in Figure~\ref{fig:asm}. Apparently this is not efficient as the processing unit could be offline for arbitrary length of time; see Assumption {\bf A3}. 


%
%
%

When $\delta$ is large enough, we can introduce a naive approach which makes use of the redundancy introduced by the overlaps and allows each processing unit to carry extra information from neighboring processing units. On the processing unit $i$, we use the redundant information in the overlapping region $D^{\delta-\gamma_i}_i \setminus D^0_i$ (buffer zone), when the processing unit who owns these DOFs fails. Here, $0 \le \gamma_i \le \delta$ and is usually not equal to 0 to reduce boundary pollution effects. 
\begin{figure}[h!!] 
   \centering
   \includegraphics[width=0.85\linewidth]{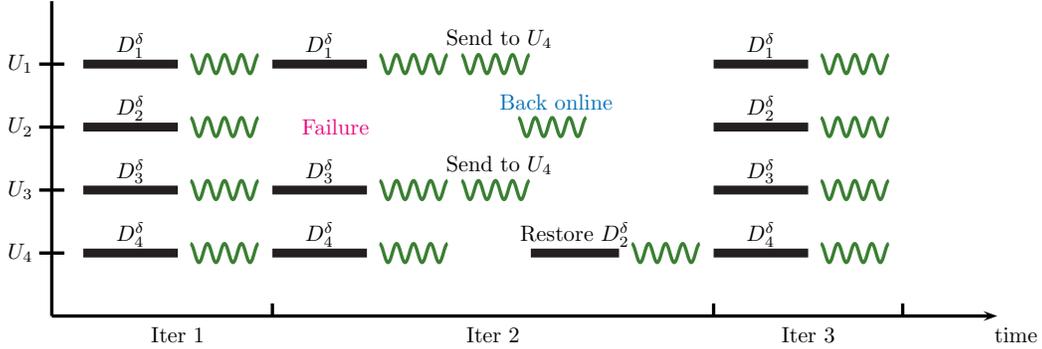} 
   \vskip -0.5cm
   \caption{Parallel subspace correction using data in $\delta$-overlapping areas to recover lost data}
   \label{fig:asm_buff}
\end{figure}
As an example, the union of the buffer zone on $U_1$ and $U_3$ could cover part of the degree of freedoms in $D_2$. When $U_2$ fails, we can request data for preconditioner as well as iterative method from $U_1$ and $U_3$; see Figure~\ref{fig:asm_buff}.

%
%
%
%

Due to the pollution effect, the convergence rate of this method deteriorates when there are failed processing units. It is easy to see that the approach discussed above is not realistic and it requires to introduce enough redundancy in order to achieve error resilience. 

%
%


%

\section{Method of redundant subspace corrections}\label{sec:msc}

In the previous section, we have discussed the behavior of method of subspace corrections (MSC) in a non-error-free environment. There are several possible ways to improve resilience of MSC and the key is to introduce redundancy. In fact, if we review the decomposition \eqref{eqn:decomp}, there is nothing to prevent us from repeating the subspaces $V_i$'s---We can have same subspace $V_i$ multiple time on different processing units. 

\subsection{Redundant subspaces}\label{sec:redundant}

One simple approach to introduce redundancy is to use multiple processes to solve each subspace problem. This is in the line of process duplication approach which is often used to enhance reliability of important and vulnerable components of an application. However this approach associates with high computation/communication overhead and shall not be applied for the whole system.

We now introduce another approach: We pair processing units and each processing unit carries its own data as well as the data for its brother (in the same pair) as redundancy information. We use a simple example to explain the main idea: We keep two distinctive subspaces in each processing unit as illustrated by the following distribution for a simple 4-subspace on 4-process case in Figure~\ref{fig:partition}:
$$
\begin{array}{ccc}
\text{Process} & \text{Owned Subspace} & \text{Redundant Subspace} \\
U_1 & V_1, D_1^\delta & V_2, D_2^\delta \\
U_2 & V_2, D_2^\delta & V_1, D_1^\delta \\
U_3 & V_3, D_3^\delta & V_4, D_4^\delta \\
U_4 & V_4, D_4^\delta & V_3, D_3^\delta
\end{array}
$$
where $U_i$ is the $i$-th processing unit and $V_j$ is the $j$-th subspace. 
Suppose $U_1$ has its owned subspace data $D^\delta_1$; in addition, it also has the data for $D^\delta_2$; see Figure~\ref{fig:partition} (Right). This way, when one processing unit ($U_2$) fails, its subspace solver $S_2$ can be carried out on the corresponding redundant processing unit ($U_1$). 
\begin{figure}[h] 
   \centering
   \includegraphics[width=0.85\linewidth]{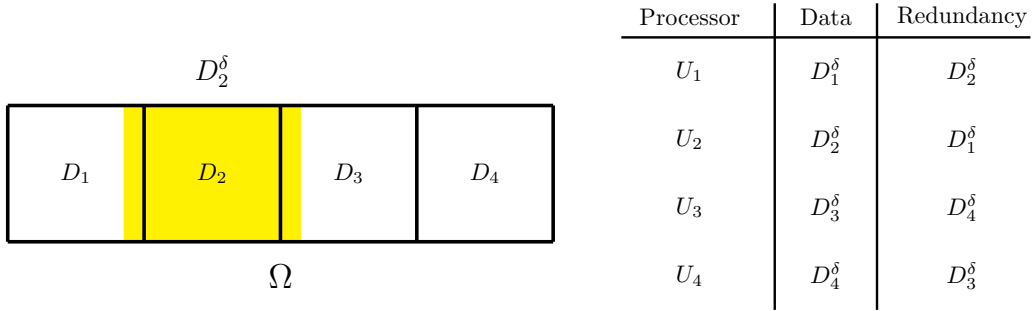} 
   \vskip -0.5cm
   \caption{Partition of the physical domain and redundant data storage}
   \label{fig:partition}
\end{figure}

Algorithmically, if we solve $D^\delta_2$ subproblem without using the solution in $D_1$ which has been calculated on $U_1$, then this method is equivalent to the classical additive Schwarz method; see Figure~\ref{fig:asm_rec}.\footnote{In this figure, we distinguish regular subspace and redundant subspace corrections by different colors.} An apparent drawback of this method is that, when one processing unit fails, the load balance of the parallel program is destroyed. 
\begin{figure}[h!!] 
   \centering
   \includegraphics[width=0.85\linewidth]{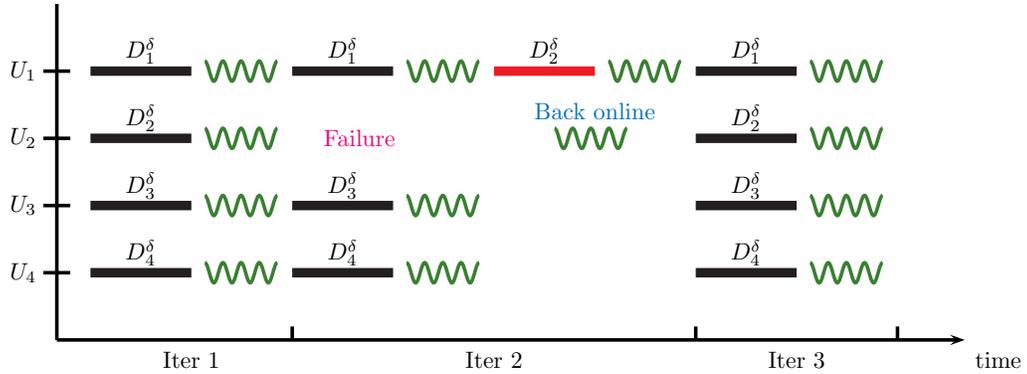} 
   \vskip -0.5cm
   \caption{Parallel subspace correction using redundant information to perform subspace solver for an erroneous processing unit}
   \label{fig:asm_rec}
\end{figure}

\begin{remark}[Subspace Corrections with Redundancy]\rm
When $U_2$ fails (see Figure~\ref{fig:asm_rec}), we can use the solution obtained in $D_1$ (because it is easily available) before we solve the subspace problem in $D_2^\delta$ and obtain a slightly better solution for $D_2$. This method in turn improves the convergence rate. However, it still causes most of the processing units to be idle during the erroneous states, which makes the method not desirable. 
\end{remark}

\subsection{Compromised redundant subspace corrections}\label{ssc:full}

To improve load balance of the method in \S\ref{sec:redundant} (as illustrated in Figure~\ref{fig:asm_rec}) in massively parallel environment, we choose to use a computationally cheap subspace solver $S_j^c$ instead of $S_j$ for the erroneous processing unit $j$. 

We consider the same example as in \S\ref{sec:redundant}. Assume that $U_2$ fails. We then have the following parallel subspace correction:
$$
\begin{array}{lll}
U_1: & V_1 & S_1 \\
U_1: & V_2 & S_2^c \\
U_3: & V_3 & S_3 \\
U_4: & V_4 & S_4 \\
\end{array}
$$
Here, $S_i$ is the usual (approximate) inverse or a preconditioner of the local matrix associated with subspace $V_i$. On the other hand, $S_j^c$ is a compromised subspace solver/preconditioner---This operator will be used to replace $S_j$ when the $j$-th processing unit fails and part or whole information of the subspace $V_j$ is not available. When a processing unit ($U_2$ for example) fails to return correct results, we could make use of the redundant subspace information (stored on $U_1$) for this erroneous process to recover the corresponding subspace solver results. 

The compromised subspace solver $S_j^c$ can be simply a proper scaling $\alpha_j I$, where $\alpha_i$ is a positive scaling parameter. In fact, it is equivalent to replace the exact subspace solver by the Richardson method for the subspace problem on $V_j$. Of course, we can also choose to use weighted Jacobi method instead. We now arrive at the following iterative scheme: Replacing the iterative method \eqref{eqn:ssc} in SSC by
\begin{eqnarray}
v &=& v + S_i Q_i (f-Av) \qquad i = 1, 2, \ldots, j-1 \label{eqn:rssc1}\\
v &=& v + S_j^c Q_j (f-Av) \label{eqn:rssc2} \\
v &=& v + S_i Q_i (f-Av) \qquad i = j+1, \ldots, N.\label{eqn:rssc3}
\end{eqnarray}
This yields the compromised redundant subspace correction method 
\begin{equation}\label{eqn:rssc}
I - B_{\text{SSC}}^c A = (I-T_N)\cdots(I-T_{j+1})(I-T_j^c)(I-T_{j-1})\cdots(I-T_1).
\end{equation}

By choosing $S_j^c = \alpha_j I$, we have 
$$
T_j^c = S_j^c Q_j A = \alpha_j Q_j A = \alpha_j A_j P_j.
$$
It is easy to see that, if $\alpha_j$ is small enough, then $\| I - S_j^c A_j \|_A = \| I - \alpha_j A_j \| < 1$ and $\overline T_j^c$ is symmetric positive definite (\cite[Lemma 4.1]{Xu.J;Zikatanov.L2002}). We can then obtain the following convergence result using  Theorem~\ref{thm:xz}:

\begin{corollary}[Convergence of Compromised Redundant Subspace Corrections]\rm
If the $j$-th processing core is in the erroneous state and $\alpha_j$ is small enough, $\|I - B_{\text{SSC}}^c A\|<1$. Hence the iterative method \eqref{eqn:rssc1}--\eqref{eqn:rssc3} converges. 
\end{corollary}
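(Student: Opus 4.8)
The plan is to recognize that the compromised scheme \eqref{eqn:rssc1}--\eqref{eqn:rssc3} is itself an ordinary SSC iteration: it uses the same space decomposition \eqref{eqn:decomp}, but replaces the subspace solver on $V_j$ by $S_j^c = \alpha_j I$ while keeping all other $S_i$ intact. Since \eqref{eqn:rssc} has exactly the product form \eqref{eqn:ssc2} with the factor $T_j$ replaced by $T_j^c$, convergence will follow immediately from the X-Z identity (Theorem~\ref{thm:xz}), provided the modified collection $\{S_1,\dots,S_{j-1},S_j^c,S_{j+1},\dots,S_N\}$ still satisfies the hypotheses of that theorem. So the task reduces to checking those hypotheses for the single altered subspace.

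First I would dispose of the easy hypotheses: the decomposition \eqref{eqn:decomp} is untouched, and for every $i\ne j$ the solver condition \eqref{eqn:subspace} holds by assumption. It then remains only to verify \eqref{eqn:subspace} for the compromised solver. Since $S_j^c=\alpha_j I$ is SPD with $\alpha_j>0$ and $A_j$ is SPD, the energy norm $\|I-S_j^c A_j\|_A$ is strictly less than $1$ whenever $0<\alpha_j<2/\lambda_{\max}(A_j)$, where $\lambda_{\max}(A_j)$ denotes the largest eigenvalue of $A_j$; this is precisely the convergence window of the Richardson iteration on $V_j$ and is what is meant by ``$\alpha_j$ small enough.'' Alongside this I would record, via \cite[Lemma~4.1]{Xu.J;Zikatanov.L2002}, that under the same smallness condition $\overline{T}_j^c = T_j^c + (T_j^c)^* - (T_j^c)^* T_j^c$ is symmetric positive definite, so that the factor $(\overline{T}_j^c)^{-1/2}$ entering the X-Z constant is well-defined.

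With both hypotheses in place, I would invoke Theorem~\ref{thm:xz} for the operator set $\{T_1,\dots,T_{j-1},T_j^c,T_{j+1},\dots,T_N\}$ to obtain
$$
\|I-B_{\text{SSC}}^c A\|_A^2 = 1 - \frac{1}{C^c},
$$
where $C^c$ is the associated X-Z constant. To conclude the strict contraction, I would argue that $C^c$ is finite: in the finite-dimensional space $V$ the supremum in the definition of $C^c$ is taken over the compact $A$-unit sphere, while the inner infimum ranges over decompositions that exist because $V=\sum_i V_i$, so $C^c<\infty$ (and automatically $C^c\ge 1$). Hence $1-1/C^c<1$, which is the desired bound in the energy norm and yields convergence of \eqref{eqn:rssc1}--\eqref{eqn:rssc3}.

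The main obstacle, though a mild one, is exactly the two verifications for the $j$-th subspace: that the scaled identity is an admissible subspace solver in the sense of \eqref{eqn:subspace}, and that $\overline{T}_j^c$ remains positive definite so that the X-Z constant stays meaningful. Both collapse to the single requirement $0<\alpha_j<2/\lambda_{\max}(A_j)$; once they are secured, the result is a direct corollary of Theorem~\ref{thm:xz} with no further analysis. I would also note that the statement is most naturally read in the $A$-norm, which is the norm supplied by the X-Z identity.
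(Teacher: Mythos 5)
Your proposal is correct and follows essentially the same route as the paper: verify the subspace-solver condition \eqref{eqn:subspace} for $S_j^c=\alpha_j I$ (the paper's ``$\alpha_j$ small enough,'' which you correctly identify as the Richardson window $0<\alpha_j<2/\lambda_{\max}(A_j)$), invoke \cite[Lemma~4.1]{Xu.J;Zikatanov.L2002} for the positive definiteness of $\overline{T}_j^c$, and then apply Theorem~\ref{thm:xz} to the modified operator family. Your added remarks on the finiteness of the X-Z constant and on the $A$-norm being the natural norm for the conclusion are details the paper leaves implicit, not a different argument.
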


\begin{remark}[Residual Computation]\rm
The coefficient matrix $A$, the solution vector $v$, and the right hand side $f$ are stored in distributed memory model with redundancy. The residual $r=f-Av$ can be computed by the redundant data when an error or failure is captured. On the 4-process case as in Figure~\ref{fig:asm_rec}, 
$A=(A_1^T, A_2^T, A_3^T, A_4^T)^T$,
 $v=(v_1, v_2, v_3, v_4)^T$ and $f=(f_1, f_2, f_3, f_4)^T$
are stored as:
$$
\begin{array}{ccc}
\text{Process} & \text{Owned Data} & \text{Redundant Data} \\
U_1 & A_1, v_1, f_1 & A_2, v_2, f_2 \\
U_2 & A_2, v_2, f_2 & A_1, v_1, f_1 \\
U_3 & A_3, v_3, f_3 & A_4, v_4, f_4\\
U_4 & A_4, v_4, f_4 & A_3, v_3, f_3
\end{array}
$$
Subspace data $A_i$ and $f_i$ remain the same in each iteration and the redundant $v_i$ (e.g. $v_1$ on $U_2$) must be updated when owned $v_i$ (e.g. $v_1$ on $U_1$) is changed and vice versa. This introduces an extra point-to-point communication (in each processor pair). When there is an error or failure captured on $U_2$, we can use the redundant $A_2$, $v_2$ and $f_2$  stored on $U_{1}$ to compute the residule vector which requires one matrix-vector operation and one vector-vector operation on  $U_1$ .
\end{remark}

\subsection{Improving parallel scalability and efficiency}\label{sec:improve}

We have introduced a new subspace correction method with redundant information above. However, this approach is not desirable as all processing units except $U_1$, when it carries out the subspace solver $S_1^c$. Even though $S_j^c$ is much cheaper than the usual subspace solver $S_j$, it still cause undesirable idle for the majority of the processing units. In this subsection, we discuss how to improve parallel scalability and efficiency of the compromised redundant subspace correction method \eqref{eqn:rssc1}--\eqref{eqn:rssc3}. 

In order to remove this idle part of the algorithm completely, we choose $S_j^c = 0$ in the compromised redundant subspace correction method, i.e.,
\begin{eqnarray}
v &=& v + S_i Q_i (f-Av) \qquad i = 1, 2, \ldots, j-1 \\
v &=& v + S_i Q_i (f-Av) \qquad i = j+1, \ldots, N.
\end{eqnarray}
We use the example in Figure~\ref{fig:partition} to demonstrate the idea. In this case the iteration operator 
\begin{equation}\label{eqn:srsc1}
I - B^c_{\text{SSC}} A = (I - T_4) (I - T_3) (I - T_1).
\end{equation}
Of course this method will not be reliable as one the subspace never been corrected when the process is erroneous. This is because we completely ignore the redundant information.

Now we add another iteration step to compensate the loss information with the help of the redundant subspace to make another ``compromised'' subspace correction using
$$
\begin{array}{lll}
U_1: & V_2 & S_2 \\
U_3: & V_4 & S_4 \\
U_4: & V_3 & S_3
\end{array}
$$
This gives another iteration operator:
\begin{equation}\label{eqn:srsc2}
I - \tilde B^c_{\text{SSC}} A = (I - T_3) (I - T_4) (I - T_2).
\end{equation}

We then have the successive redundant subspace correction (SRSC) method
\begin{equation}\label{eqn:srsc}
I - B_{\text{SRSC}} A = (I - \tilde B^c_{\text{SSC}} A)(I - B^c_{\text{SSC}} A).
\end{equation}
See the flow chart in Figure~\ref{fig:rsc} for an illustration. 
\begin{figure}[htbp] 
   \centering
   \includegraphics[width=0.9\linewidth]{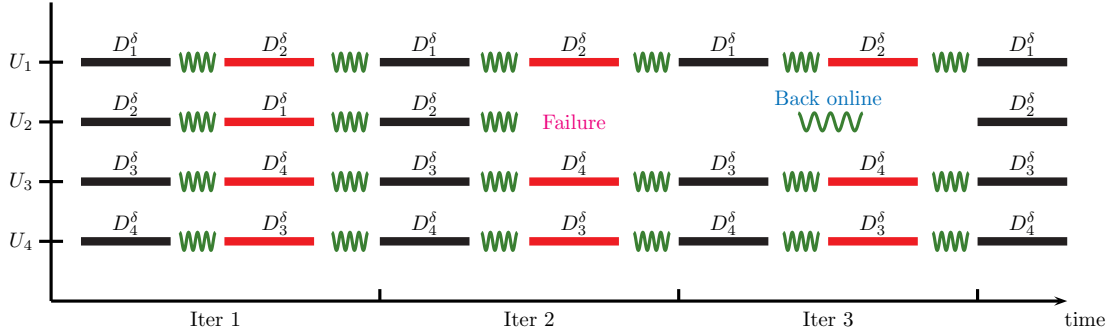} 
   \vskip -0.5cm
   \caption{Redundant subspace correction method}
   \label{fig:rsc}
\end{figure}  

\begin{remark}[Error/Failure Handling]\label{rem:error-handling}\rm
We now consider error and failure handling in the virtual machine environment discussed in \S\ref{sec:virtual}. In the redundant subspace correction method, when errors are detected in a process, we directly put this process into the failed state and take it out from the redundant subspace correction iteration. After the error on that process has been corrected, we recover this process from the failed state and resynchronize it with other processes for the iterative procedure. This error handling can also be applied for a fail-stop process caused by non-responsive nodes, which makes local failure local recovery (LFLR) possible. 
\end{remark}

\begin{remark}[Overhead of RSC]\rm
The main idea of RSC is that, by locally keeping redundant subspaces in appropriate processing units, lost information can be retrieved from the redundant subspaces to keep the iterative method as well as the preconditioning procedure to continue without compromising convergence rate when failure of some processing threads or computing processing units occurs. The overhead in computing work and communication is marginal when no failure occurs. 
\end{remark}

\begin{remark}[SRSC When Error-Free]\label{rem:error-free}\rm
We can see that the convergence rate of SRSC is at least as good as the corresponding SSC method in the worse case scenario. In fact, if there is no error occurs, then the identity \eqref{eqn:srsc} yields that 
$$
I - B_{\text{SRSC}} A = (I - \tilde B_{\text{SSC}} A)(I - B_{\text{SSC}} A),
$$
i.e., the SRSC method converges twice as fast as the corresponding SSC method. 
\end{remark}

\begin{theorem}[Convergence Estimate of Redundant Subspace Correction]\label{thm:srsc}\rm
If an error occurs during computation, the convergence rate of the successive redundant subspace correction method \eqref{eqn:srsc} satisfies 
\begin{equation}\label{thm:1}
\|I - B_{\text{SRSC}} A\|_A \le \|I - B_{\text{SSC}} A\|_A. 
\end{equation}
If there is no error during computation, the convergence rate satisfies that
\begin{equation}\label{thm:2}
\|I - B_{\text{SRSC}} A\|_A \le \|I - B_{\text{SSC}} A\|_A \; \|I - \tilde B_{\text{SSC}} A\|_A. 
\end{equation}
\end{theorem}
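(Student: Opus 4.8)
The plan is to reduce both estimates to submultiplicativity of the $A$-operator norm, after first establishing that every elementary correction factor $I-T_i$ is an $A$-contraction. The only genuine ingredient is this contraction property, so I would begin there. Under assumption \eqref{eqn:subspace}, each $T_i=S_iQ_iA$ satisfies $\|I-T_i\|_A\le 1$. This follows from the algebraic identity $(I-T_i^*)(I-T_i)=I-\overline T_i$ with $\overline T_i=T_i+T_i^*-T_i^*T_i$, which gives, for every $v\in V$,
\[
\|(I-T_i)v\|_A^2=\|v\|_A^2-(\overline T_i v,v)_A .
\]
By \cite[Lemma 4.1]{Xu.J;Zikatanov.L2002}, condition \eqref{eqn:subspace} makes $\overline T_i$ positive semidefinite with respect to $(\cdot,\cdot)_A$; hence $(\overline T_i v,v)_A\ge 0$, so $\|(I-T_i)v\|_A\le\|v\|_A$ and therefore $\|I-T_i\|_A\le 1$.

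Next I would treat each sweep as a product of such factors. By construction \eqref{eqn:srsc2} (and its obvious analogue for general $N$), $I-\tilde B^c_{\text{SSC}}A$ is a finite product of operators of the form $I-T_i$; submultiplicativity of $\|\cdot\|_A$ together with the contraction bound from the first step then yields $\|I-\tilde B^c_{\text{SSC}}A\|_A\le 1$. The identical reasoning applies to the first compromised sweep $I-B^c_{\text{SSC}}A$ of \eqref{eqn:srsc1}, and, in the error-free regime, to the full sweeps $I-B_{\text{SSC}}A$ and $I-\tilde B_{\text{SSC}}A$.

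For the error case \eqref{thm:1} I would then combine these facts with the defining factorization \eqref{eqn:srsc}. Writing $I-B_{\text{SRSC}}A=(I-\tilde B^c_{\text{SSC}}A)(I-B^c_{\text{SSC}}A)$ and applying submultiplicativity,
\[
\|I-B_{\text{SRSC}}A\|_A\le\|I-\tilde B^c_{\text{SSC}}A\|_A\,\|I-B^c_{\text{SSC}}A\|_A\le\|I-B^c_{\text{SSC}}A\|_A,
\]
where the last inequality uses that the compensation sweep is an $A$-contraction; the remaining factor is precisely the single compromised sweep that plays the role of $B_{\text{SSC}}$ when the $j$-th unit has failed. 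For the error-free case \eqref{thm:2} I would invoke Remark~\ref{rem:error-free}: both sweeps are then genuine SSC sweeps, so that $I-B_{\text{SRSC}}A=(I-\tilde B_{\text{SSC}}A)(I-B_{\text{SSC}}A)$, and the same product estimate gives the sharper bound $\|I-B_{\text{SRSC}}A\|_A\le\|I-B_{\text{SSC}}A\|_A\,\|I-\tilde B_{\text{SSC}}A\|_A$.

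The main thing to watch is not a hard estimate but a bookkeeping point: identifying precisely which operator the symbol $B_{\text{SSC}}$ denotes in each regime (the compromised one-sweep operator $B^c_{\text{SSC}}$ when an error occurs, versus the full SSC operator when it does not), and verifying that each sweep really is a product of factors $I-T_i$ so that the termwise contraction bound of the first step applies. Everything beyond that is submultiplicativity of the operator norm, so I do not expect any delicate estimation.
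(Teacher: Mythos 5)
Your argument for \eqref{thm:2} is fine and essentially matches the paper (submultiplicativity applied to the error-free factorization of Remark~\ref{rem:error-free}). The problem is \eqref{thm:1}. Your chain of inequalities terminates at $\|I-B_{\text{SRSC}}A\|_A \le \|I-B^c_{\text{SSC}}A\|_A$, and you then declare that the compromised sweep $B^c_{\text{SSC}}$ ``plays the role of $B_{\text{SSC}}$'' when a unit has failed. It does not: the $B_{\text{SSC}}$ appearing in \eqref{thm:1} is the full, error-free successive subspace correction operator of \eqref{eqn:ssc2}, built from \emph{all} $N$ subspaces, and that is what the paper's proof compares against. The compromised sweep omits the failed subspace, so (in the paper's example) $V_1+V_3+\cdots+V_N$ need not span $V$ and $\|I-B^c_{\text{SSC}}A\|_A$ is in general equal to $1$. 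Your bound therefore collapses to $\|I-B_{\text{SRSC}}A\|_A\le 1$, which implies neither convergence nor the stated comparison with $\|I-B_{\text{SSC}}A\|_A<1$. The gap cannot be closed by submultiplicativity alone: the redundant factors are interleaved in the interior of the product $(I-\tilde B^c_{\text{SSC}}A)(I-B^c_{\text{SSC}}A)$, so they cannot be peeled off to leave the full product $(I-T_N)\cdots(I-T_1)$, and in any case deleting a contraction factor from the middle of a product of contractions can increase the norm.

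The paper's proof rests on a different and essential idea: the two compromised sweeps together form a single SSC sweep over an \emph{enlarged} family of $2N-2$ subspaces $W_1,\dots,W_{2N-2}$ that contains every original $V_i$ at least once, because the failed subspace is corrected through its redundant copy in the second sweep. Applying the X-Z identity (Theorem~\ref{thm:xz}) to this enlarged family, and restricting the infimum to decompositions $v=\sum_i v_i$ whose components vanish on the redundant copies, shows that the X-Z constant of SRSC is no larger than that of the original $N$-subspace SSC, which gives \eqref{thm:1}. If you want to avoid the X-Z identity you would still need some comparison of stable decompositions between the two families; the termwise contraction property of the factors $I-T_i$ is not sufficient.
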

\begin{proof}
With loss of generality, we assume that the processing unit which contains the data for the subspace $V_1$ (and $V_2$ as the redundant subspace) fails or is taken out of the iteration due to errors. Let $W_i = V_i$ if $1 \le i \le N$, and $W_i = V_{i-N+2}$ if $N < i \le 2N-2$. In this case, we have the space decomposition 
$$
V = \sum_{i=1}^N V_i + \sum_{k=3}^N V_k = \sum_{i=1}^{2N-2} W_i,
$$
where $V_k$ ($k=3, \ldots, N$) are the redundant subspaces. 
For any $v \in V$, we have a decomposition
$$
v = \sum_{i=1}^{2N-2} v_i \quad \text{and} \quad v_i \in W_i \; (i=1,\ldots,2N-2).
$$
Moreover, we have a special case of this decomposition is 
$$
v = \sum_{i=1}^{2N-2} w_i = \sum_{i=1}^N w_i, \qquad w_i  \in W_i.
$$ 
In another word, $w_i = 0$ if $N < i \le 2N-2$. We then immediately obtain that
$$
\inf_{v=\sum_{i=1}^{2N-2}v_i} \sum_{i=1}^{2N-2} \|\overline T_i ^{-\frac{1}{2}} \Big( v_i + T_i^* P_i \sum_{j>i} v_j \Big) \|_A ^2 
\le \sum_{i=1}^{N} \|\overline T_i ^{-\frac{1}{2}} \Big( w_i + T_i^* P_i \sum_{N \ge j>i} w_j \Big) \|_A ^2.
$$
As $w_i \in W_i = V_i$ ($i=1,2,\ldots,N$) could be anything, we have 
$$
\inf_{v=\sum_{i=1}^{2N-2}v_i} \sum_{i=1}^{2N-2} \|\overline T_i ^{-\frac{1}{2}} \Big( v_i + T_i^* P_i \sum_{j>i} v_j \Big) \|_A ^2 
\le \inf_{v=\sum_{i=1}^{N}v_i} \sum_{i=1}^{N} \|\overline T_i ^{-\frac{1}{2}} \Big( v_i + T_i^* P_i \sum_{j>i} v_j \Big) \|_A ^2.
$$
The inequality \eqref{thm:1} of the theorem then follows from the above inequality and Theorem~\ref{thm:xz}. The equality \eqref{thm:2} is straightforward from Remark~\ref{rem:error-free}.
\end{proof}

\begin{remark}[More Erroneous Processing Units]\rm
Although we assume only one processing unit can be in the erroneous state (Assumption~{\bf A1}), we can easily see, from Theorem~\ref{thm:srsc}, that the method still converges as long as at least one processing unit from each pair works correctly.
\end{remark}

The corresponding preconditioner of the parallel subspace correction method \eqref{eqn:psc} can be written as follows:
\begin{equation}
B_{\text{PSC}}^c:=S_1 Q_1+S_3 Q_3+S_4 Q_4.
\end{equation}
Using a similar approach as in SRSC, we then apply a parallel subspace correction from the redundant copy of subspace preconditioner to make another ``compromised'' subspace correction using
\begin{equation}
\tilde B_{\text{PSC}}^c := S_2 Q_2+S_4 Q_4+S_3 Q_3.
\end{equation}
Finally, we combine the above two incomplete subspace correction preconditioners, $B_{\text{PSC}}^c$ and $\tilde B_{\text{PSC}}^c$, in a multiplicative fashion to obtain a new preconditioner $B_{\text{PRSC}}$:
$$
I-B_{\text{PRSC}} A=(I - \tilde B_{\text{PSC}}^c A) (I - B_{\text{PSC}}^c A). 
$$
This is an example of the Redundant Subspace Correction (RSC) method; see Figure~\ref{fig:rsc}.

\begin{remark}[PRSC When Error-Free]\rm
If we use a nested sequence of subspaces $V_1 \subset V_2 \subset \cdots \subset V_N \equiv V$, then the method is actually the BPX preconditioner~\parencite{Bramble.J;Pasciak.J;Xu.J1990}. When no error occurs during the iterative procedure, we have
$$I - B_{\text{PRSC}} A = (I - B_{\text{PSC}} A)^2 = (I - B_{\text{BPX}} A)^2.$$
\end{remark}

%

\section{Numerical Experiments}\label{sec:numerics}

In this section, we design a few numerical experiments to test the proposed redundant subspace correction methods with a few widely used partial differential equations and their standard discretizations.

\subsection{Test problems}\label{ss:tests}

The numerical experiments are done for the Poisson equation, the Maxwell equation, and the linear elasticity equation in three space dimension with the Dirichlet boundary condition. The computational domain is the unit cube $\Omega=(0,1)^3$. The domain partitioning has been done using the METIS package~\parencite{karypis1998fast} and a sample partition is given in Figure~\ref{fig:ddm}.
\begin{figure}[htbp] 
   \centering
   \includegraphics[width=0.45\linewidth]{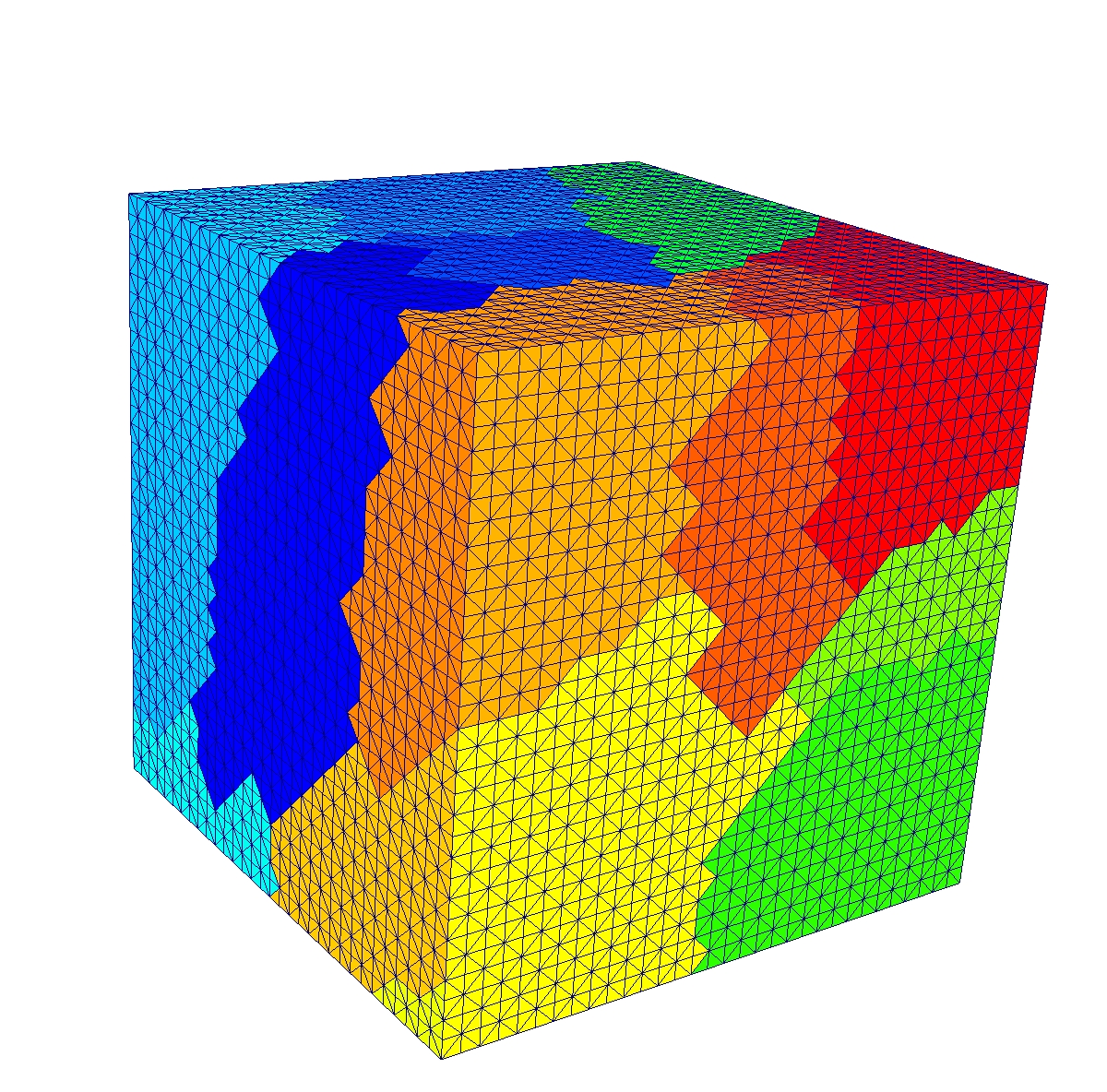} 
   \vskip -0.5cm
   \caption{A sample domain partition of a unit cube for the Poisson equation}
   \label{fig:ddm}
\end{figure}

\example{The Poisson's equation}\rm
\begin{equation}
  \begin{cases}
    - \Delta u = f,& \mbox{ in } \Omega \\
      u = g,& \mbox{ on }\partial\Omega
  \end{cases}
\end{equation}
The first order lagrange element is used for discretization. We use the continuous piecewise linear Lagrange finite element (FE) discretization to solve this equation. 

\example{The Maxwell equation}\rm
\begin{equation}
  \begin{cases}
      \nabla \times \mu^{-1} \nabla \times\vec{E} - k^2 \vec{E} = \vec{J}, & \mbox{ in } \Omega \\
      \vec{E} \times \vec{n} = \vec{g} \times \vec{n}, &\mbox{ on }\partial\Omega
  \end{cases}
\end{equation}
The parameters $\mu=1$ and $k^2=-1$. The exact solution is chosen to be
\begin{equation*}
\left(
\begin{array}{c}
  xyz(x-1)(y-1)(z-1)(x-0.5)(y-0.5)(z-0.5) \\
\sin(2\pi x)\sin(2\pi y) \sin(2\pi z) \\
(1-e^x)(e-e^x)(e-e^{2x})(1-e^y)(e-e^y)(e-e^{2y})(1-e^z)(e-e^z)(e-e^{2z})
\end{array}
\right).
\end{equation*}
The lowest order edge element is used for discretization. 

\example{The linear elasticity equation}\rm
\begin{equation}
  \begin{cases}
      \nabla \cdot \mathbf{\tau} = \vec{f}, & \vec{x} \in \Omega \\
      \vec{u} = \vec{g}, & \vec{x} \in \partial\Omega
  \end{cases}
\end{equation}
where
\begin{equation}
\tau_{ij} = 2\mu \epsilon_{ij} + \lambda \delta_{ij} \epsilon_{kk}, 
\quad
\epsilon_{ij} = \frac{1}{2}(u_{i,j}+u_{j,i})
\qquad (i,j=1,2,3),
\end{equation}
and
$u_{i,j}=\partial u_i/ \partial x_j$. The parameters are given by
\begin{equation}
\left\{
\begin{array}{c}
\lambda = \frac{E\nu}{(1+\nu)(1-2\nu)} \\
\mu = \frac{E}{2(1+\nu)},
\end{array}
\right.
\end{equation}
where 
$E=2.0$ and $\nu = 0.25$. The continuous piecewise quadratic Lagrange finite element is used for discretization. 

\subsection{Implementation details}

All numerical tests are carried out on the LSSC-I\!I\!I cluster at State Key Laboratory of Scientific and Engineering Computing (LSEC), Chinese Academy of Sciences. The LSSC-I\!I\!I cluster has 282 computing nodes: Each node has two Intel Quad Core Xeon X5550 2.66GHz processors and 24GB shared memory; all nodes are connected via Gigabit Ethernet and DDR InfiniBand. Our implementation is based on~\cite{phg}, which is a toolbox for developing parallel adaptive finite element programs on unstructured tetrahedral meshes and it is under active development at the LSEC. 

We use MPI distributed memory parallelism paradigm and a processing unit is just one core in a multicore cluster in our experiments. We simplify the non-error-free environment by setting one of the process to be fail and not responding from beginning to end of iterative methods. This way, the failed core does not contribute to the solution of linear systems at all. This removes the complication for considering detecting and fixing the error, which allows us to focus on the convergence and scalability of the proposed RSC methods. Furthermore, this also free us from considering the overhead introduced by detecting and fixing errors and we can obtain a good idea on the algorithmic overhead introduced by the error resilience feature of our algorithm. 

In the following of this section, we present a few preliminary numerical examples for the performance of the proposed methods on a virtual machine as discussed in \S\ref{sec:virtual}. We mainly interested in testing the following: (1)~convergence of the successive redundant subspace correction (SRSC) method as an iterative method; (2)~algorithmic overhead introduced by SRSC compared with regular SSC; (3)~performance of the parallel redundant subspace correction (PRSC) method as a preconditioner and its overhead; (4)~weak scalability of PRSC as a preconditioner. 

Since the preconditioner action might change during the iteration, we should use flexible versions of the Krylov space iterative methods together with PRSC, such as the Flexible Conjugate Gradient (FCG) or the Flexible Generalized Minimal Residual (FGMRES) method with restart. We employ the Flexible GMRES method~(\cite{Saad1996}) as the iterative solver and we need a resilient iterative method as well. In all our numerical experiments, FGMRES with restarting number $30$ is used and the maximum iteration number is set to be $10000$. One can consider to combine the FT-FGMRES~\parencite{Hoemmen} with the proposed redundant subspace correction preconditioners to improve convergence rate of sparse iterative solvers. 

In the numerical experiments, we choose an extensively studied algorithm, the domain decomposition method with out the coarse space, which can be analyzed as a special case of the method of subspace correction; see \cite{Chan1994,toselli2005domain} for a comprehensive overview of the field. We employ the multiplicative Schwarz method (a SSC method) and the additive Schwarz method (a PSC method) with overlapping level $\delta=2$.\footnote{Note that additive and multiplicative Schwarz methods with coarse mesh correction are not be the best options for the test problems under consideration; see more discussions in \S\ref{ss:scale}.} To make a fair comparison, we always start the iterative procedure from a zero initial guess in our tests. We terminate the iterative procedure when the relative Euclidean residual less than a fixed tolerance $tol=10^{-8}$. In the tables, ``\#Iter'' denotes the number of iterations, ``DOF'' denotes the degree of freedom, and ``Time'' denotes the wall time for computation in seconds.  

\subsection{Convergence and efficiency}\label{ss:convergence}

First we test the convergence of the proposed redundant subspace correction method (SRSC) and we are interested in the impact of one erroneous process. In this test, we use 16 processing cores and the results are reported in Table~\ref{tab:psc-solver}. In a non-error-free case, we let processing core $U_1$ fail from the starting till the end of computation as we mentioned earlier. 
\begin{table}[h!!]
   \centering
   \begin{tabular}{@{} ccccccc @{}} 
         \toprule
\multirow{3}{*}{Error-Free} 
& \multicolumn{2}{c}{Poisson}  & \multicolumn{2}{c}{Maxwell} &\multicolumn{2}{c}{Elasticity}  \\
& \multicolumn{2}{c}{(2,146,689 DOFs)}  & \multicolumn{2}{c}{(1,642,688 DOFs)} &\multicolumn{2}{c}{(823,875 DOFs)} \\
      \cmidrule(lr){2-3}\cmidrule(lr){4-5}\cmidrule(lr){6-7} 
    & \#Iter & Time & \#Iter & Time & \#Iter & Time \\
\hline
Yes    &  44     &  70.73  &   63    &   68.76   & 73 & 223.14  \\
\hline
No   &  48     &  81.01  &  67  &  74.28   & 74 & 229.21  \\
\bottomrule
   \end{tabular}
   \caption{Convergence of colorized SRSC as an iterative method in error-free and non-error-free environments}
   \label{tab:psc-solver}
\end{table}
From the numerical results, we find that the proposed SRSC method converges. Furthermore, even with ${1 \over 16}$ of the processes failed, the convergence rate of the method does not deteriorate much---Number of iterations increase by $9\%$ or less. This is exactly what we expect based on the theoretical estimates in \S\ref{sec:msc}.

Next we compare the performance of RPSC and the standard PSC method as a preconditioner of FGMRES when no error occurs and when error occurs. In this test, we use 16 processing cores and the results are reported in Table~\ref{tab:psc-overhead}. Here we use the additive Schwarz method with overlap $\delta=2$. In a non-error-free case, we let processing core $U_1$ fail from the starting till the end of computation. 
%
%
%
\begin{table}[h!!]
   \centering
   \begin{tabular}{@{} cccccccc @{}} 
         \toprule
   \multirow{2}{*}{Example} & \multirow{2}{*}{DOF} &\multicolumn{2}{c}{$B_\text{PSC}$ Error-Free} & \multicolumn{2}{c}{$B_\text{PRSC}$ Error-Free} & \multicolumn{2}{c}{$B_\text{PRSC}$ With Error} \\
      \cmidrule(lr){3-4}\cmidrule(lr){5-6}\cmidrule(lr){7-8} 
   & & \#Iter      & Time      & \#Iter     &      Time   & \#Iter     &      Time \\
       \hline
   Poisson & 1,335,489 & 23 &  7.92 & 12& 8.09  & 13 & 8.13  \\    
       \hline
   Maxwell &   468,064 & 42 & 4.09  & 21 &  4.23 & 24 & 4.48  \\    
       \hline
   Elasticity& 436,515 & 16 & 10.18 &  9 & 11.01 & 10& 11.35 \\    
      \bottomrule
   \end{tabular}
   \caption{Performance of parallel redundant subspace correction preconditioner in error-free and non-error-free environments}
   \label{tab:psc-overhead}
\end{table}
In Table~\ref{tab:psc-overhead} we notice that the overhead introduced by the redundant subspace correction method is small from two perspectives:
\begin{itemize}
\item When there is no error, the PRSC method is still efficient compared with the standard PSC method.
\item When there is error, the PRSC method converges and the extra cost in term of wall time is less than 10\% compared with the case when there is no error. 
\end{itemize}

\subsection{Weak scalability}\label{ss:scale}

Now we focus on weak scalability of the proposed method and compare the results in the error-free case with the case when the computation is affected by a single erroneous processing core. As before we use the additive Schwarz method with overlap $\delta=2$. It is well-known that the additive Schwarz method yields a preconditioner $B_\text{AS}$ whose performance deteriorates as the size of subdomains $H$ decreases. More precisely, if $\beta$ is the ratio between the size of the overlapping region and $H$, then the condition number of the preconditioned system
$$
\kappa (B_\text{AS} A) \le C H^{-2} (1 + \beta^{-2}),
$$ 
where the constant $C$ is independent of the mesh size $h$ or $H$~\parencite{Dryja.DryjaWidlund.1989fk,dryja1992additive}. This drawback can be fixed by introducing coarse grid corrections, which in turn requires a global communication of information and needs careful implementation~\parencite{gropp1992parallel,bjorstad1992domain,smith1993parallel}. 

Because we only wish to examine the impact of redundant subspace corrections, the Schwarz methods without coarse grid corrections are good enough for this purpose. The number of iterations, wall time in seconds, and parallel efficiency are reported in Tables~\ref{tab:poisson-scal}, \ref{tab:maxwell-scal}, and \ref{tab:els-scal}. From these experimental results, we can see that the PRSC method is robust if there is one failed processing core. Furthermore, the weak scalability of the preconditioner is reasonable and it is not contaminated much by the presence of failed processes. Note that the low parallel efficiency is mainly due to the fact that the method itself is not optimal and number of iterations increases as the mesh size decreases. 

\begin{table}[htbp]
   \centering
   \begin{tabular}{@{} rrcccccc @{}} 
         \toprule
\multirow{2}{*}{DOF} & \multirow{2}{*}{\#Cores}  & \multicolumn{3}{c}{Error-Free} &  \multicolumn{3}{c}{With Error} \\
                                \cmidrule(lr){3-5} \cmidrule(lr){6-8}
            &     & \#Iter   & Time  & Efficiency & \#Iter       &     Time  &Efficiency \\
\hline
536,769       & 8     &   8      & 5.09    & 100\% &   10 &   5.51   & 100\%  \\
1,335,489    & 16   &   12    & 8.09  &  62.9\% &   13 &   8.13   &  67.8\% \\
2,146,689    & 32   &   13    & 8.64   &  58.9\% & 15 &   8.99   & 61.3\% \\
4,243,841     & 64   &   14    & 8.91   &  57.1\% & 16 &   9.37   &  58.8\% \\
10,584,449   & 128 &   19    & 12.87  &  49.5\% &  20 &   13.95   &  39.5\% \\
16,974,593   & 256 &   23    & 18.01  &  28.3\% &  25 &   19.13   &  28.8\% \\
33,751,809   & 512 &   25    & 20.90   &  24.3\% & 27 &   26.11   &  21.1\% \\
      \bottomrule
   \end{tabular}
   \caption{Performance of the PRSC preconditioner for the Poisson equation}
   \label{tab:poisson-scal}
\end{table}

\begin{table}[htbp]
   \centering
   \begin{tabular}{@{} rrcccccc @{}} 
         \toprule
\multirow{2}{*}{DOF} & \multirow{2}{*}{\#Cores}  & \multicolumn{3}{c}{Error-Free} &  \multicolumn{3}{c}{With Error} \\
                                \cmidrule(lr){3-5} \cmidrule(lr){6-8}
            &   & \#Iter   & Time  & Efficiency & \#Iter       &     Time  &Efficiency \\
\hline
 238,688    & 8   &  15      & 4.08   & 100\% & 17 &  4.48    &  100\% \\
 468,064    & 16  &  21      & 4.23  &  96.5\% & 24 &  4.88   &  91.8\% \\
 968,800    & 32  &  23      & 5.18   & 78.8\% & 26 &  5.46    &  82.1\% \\
 1,872,064    & 64  &  27      & 7.21   &  56.6\% & 30 &  8.16   &  59.8\% \\
 3,707,072    &128  &  49      & 8.02   &  50.9\% & 54 &  8.84   & 54.9\% \\
 7,676,096    &256  &  51      & 10.60  &  38.5\% & 56 &  11.99   & 37.4\%\\
14,827,904   &512  &  65      & 17.67  &  23.1\% & 73 &  19.52   & 23.0\% \\
      \bottomrule
   \end{tabular}
   \caption{Performance of the PRSC preconditioner for the Maxwell equation}
   \label{tab:maxwell-scal}
\end{table}

\begin{table}[htbp]
   \centering
   \begin{tabular}{@{} rrcccccc @{}} 
         \toprule
\multirow{2}{*}{DOF} & \multirow{2}{*}{\#Cores}  & \multicolumn{3}{c}{Error-Free} &  \multicolumn{3}{c}{With Error} \\
                                \cmidrule(lr){3-5} \cmidrule(lr){6-8}
            &     & \#Iter   & Time  & Efficiency & \#Iter       &     Time  &Efficiency \\
\hline
206,155       & 8   &   7      & 8.65   &  100\% & 8 &   8.88   &  100\% \\
436,515       & 16  &   9      & 11.01  &  78.6\% & 10 &   11.35   & 78.2\% \\
823,875       & 32  &   9      & 18.99  &  45.6\% & 11 &   19.47   & 45.6\% \\
1,610,307     & 64  &   12     & 20.48  &  42.2\% & 12 &   20.77  & 42.8\%  \\
3,416,643     & 128 &   11     & 24.14  &  35.8\% & 12 &   26.06   & 34.1\% \\
6,440,067     & 256 &   17     & 30.42  &  28.4\% & 18 &   31.92   & 27.8\% \\
12,731,523    & 512 &   21     & 33.74  & 25.6\% &  22 &   34.98   & 25.4\% \\
      \bottomrule
   \end{tabular}
   \caption{Performance of the PRSC preconditioner for the linear elasticity equation}
   \label{tab:els-scal}
\end{table}

%

\section{Concluding remarks}\label{sec:conclusion}

In this paper, we discussed a new approach to introduce local redundancy to iterative linear solvers to improve their error-resilience---We introduce redundant subspaces to the method of subspace corrections and they, in turn, can improve the resilience of the iterative procedure as well as the preconditioning step. The redundant subspace correction methods can be combined with other error detection and correction mechanisms on different levels of the system stack to improve the mean time to failure of extreme-scale computers. Exploring the intrinsic fault-tolerant features of the iterative solvers (and other numerical schemes) can open a new door to improve reliability of long-running large-scale PDE applications. We presented preliminary numerical examples to demonstrate the advantages and potentials of the proposed approach. Although our numerical tests are based on the one-level domain decomposition method, multilevel redundant subspace correction methods can be developed to improve convergence and it will be our future topic of research.
%
\printbibliography

\end{document}